\newtheorem{thm}{Theorem}
\newtheorem{cor}[thm]{Corollary}
\newtheorem{prop}[thm]{Proposition}
\newtheorem*{claim}{Claim}
\newtheorem{theorem}[thm]{Theorem}
\theoremstyle{definition}
\newtheorem{remark}[thm]{Remark}
\newcommand{\spinc}{\text{spin}^c}
\begin{document}

\title{0-Concordance of 2-knots.}

\author[N. Sunukjian]{Nathan Sunukjian}
\address{Department of Mathematics and Statistics, Calvin University, Grand Rapids, MI 49546}
\email{nss9@calvin.edu}

\begin{abstract}
In this paper we investigate the 0-concordance classes of 2-knots in $S^4$, an equivalence relation that is related to understanding smooth structures on 4-manifolds. Using Rochlin's invariant, and invariants arising from Heegaard-Floer homology, we will prove that there are infinitely many 0-concordance classes of 2-knots. 
\end{abstract}

\maketitle

\section{Introduction}

A 2-knots is a smooth embedding of $S^2$ in $S^4$, and a \emph{0-concordance} of 2-knots is a concordance with the property that every regular level set of the concordance is just a collection of $S^2$'s. In his thesis, Paul Melvin proved that if two 2-knots are 0-concordant, then a Gluck twist along one will result in the same smooth 4-manifold as a Gluck twist on the other. He asked the following question: Are all 2-knots 0-slice (i.e. 0-concordant to the unknot)? In \cite{N}, we generalized Melvin's theorem to surgeries on higher genus surfaces in arbitrary 4-manifolds, and proved that there are infinitely many 0-concordance classes of higher genus surfaces.


In this paper, we will prove the following theorem, which answers Melvin's question and question 1.105a on Kirby's problem list.

\begin{theorem}\label{mainthm}
There are an infinite number of 0-concordance classes of 2-knots.
\end{theorem}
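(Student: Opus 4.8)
The plan is to manufacture a 0-concordance invariant that takes infinitely many values on an explicit family of 2-knots. The conceptual engine is Melvin's principle, generalized in \cite{N}: if two 2-knots $K_0$ and $K_1$ are 0-concordant, then feeding them into a fixed cut-and-paste construction produces the \emph{same} smooth manifold. Consequently, any diffeomorphism (or homology-cobordism) invariant of the output of such a construction is automatically an invariant of the 0-concordance class, and it suffices to exhibit a construction whose output I can tell apart for infinitely many inputs. Since the tools advertised — the Rochlin invariant $\mu$ and the Heegaard–Floer correction terms ($d$-invariants) — are invariants of $3$-manifolds, the construction should ultimately attach to each 2-knot a $3$-manifold, or a homology-cobordism class thereof, on which these invariants can be evaluated.

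For the family I would use twist-spun knots $\tau_m(k_n)$ associated to a sequence of classical knots $k_n \subset S^3$. These are fibered 2-knots whose geometry is governed by the $m$-fold cyclic branched covers of $(S^3, k_n)$, and this branched-cover data is exactly what $d$-invariants and $\mu$ are designed to detect. Concretely, I would extract from each $\tau_m(k_n)$ the rational homology sphere $Y_n$ built from the relevant branched cover (its closed fiber), and arrange, by choosing the $k_n$ suitably — for instance so that the $Y_n$ are distinct Brieskorn-type spheres or surgeries with spreading correction terms — that the pairs $\bigl(d(Y_n,\cdot),\,\mu(Y_n)\bigr)$ realize infinitely many distinct values.

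The crux, and the step I expect to be the main obstacle, is the invariance: I must show that if $\tau_m(k)\sim_0\tau_m(k')$, then the associated $3$-manifolds $Y$ and $Y'$ are homology cobordant, so that their $d$- and Rochlin invariants necessarily agree. Here the precise definition of 0-concordance is what does the work. A 0-concordance $C \subset S^4\times I$ induces a cobordism between $Y$ and $Y'$, and the hypothesis that \emph{every regular level set of $C$ is a union of $S^2$'s} is exactly the condition that forbids high-genus handles in this induced cobordism and pins down its homology, forcing it to be a homology cobordism. Verifying this handle-by-handle control — and, beforehand, checking that the passage from the (fibered) 2-knot to $Y$ is canonical enough, despite a general concordance not respecting the fibration, to yield a well-defined homology-cobordism class — is where the genuine difficulty lies; the genus-$0$ hypothesis is both the source of the rigidity I need and the reason this is harder than the positive-genus case handled in \cite{N}.

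Granting that invariance, the proof concludes quickly: $d$-invariants and the Rochlin invariant are homology-cobordism invariants, so the computation of the previous paragraph shows that the $\tau_m(k_n)$ fall into infinitely many distinct 0-concordance classes, which is exactly Theorem~\ref{mainthm}. The remaining labor — evaluating $d(Y_n,\cdot)$ and $\mu(Y_n)$ for the chosen family — is a routine application of the standard Heegaard–Floer surgery formulas together with the Rochlin/signature correspondence, and I would relegate it to explicit computation rather than belabor it at the level of this sketch.
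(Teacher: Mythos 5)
Your choice of examples and invariants matches the paper exactly---twist-spun knots whose fibers are branched covers (e.g.\ the $2$-twist spins of $(2,p)$-torus knots with fibers $L(p,1)$, or connected sums of a twist-spun trefoil), distinguished by correction terms---but the proposal has a genuine gap at precisely the step you defer: the claim that a 0-concordance induces a homology cobordism between the fibers $Y$ and $Y'$. That step is not ``the main obstacle'' to be checked later; it is the entire content of the theorem, and the mechanism you sketch is not the one that can work. A concordance does not carry a chosen Seifert hypersurface (or fiber) along with it, so there is no ``induced cobordism'' between $Y$ and $Y'$ to control handle-by-handle. Indeed, any argument that used only data persisting across an arbitrary concordance---such as the knot complements, which are homology cobordant for \emph{every} concordance---would prove too much, since every 2-knot is slice. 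What a 0-concordance actually yields, via the decomposition (proved in \cite{N}) into two ribbon concordances meeting at a middle knot $K$, is a statement of a different shape: $Y \# n(S^1\times S^2)$ and $Y' \# m(S^1\times S^2)$ are both Seifert hypersurfaces of the \emph{single} knot $K$. These stabilized manifolds generally have different first Betti numbers (there is no reason for $n=m$), so they are not homology cobordant, and no homology-cobordism statement can come out of this construction.

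The argument that does work then runs as follows: surger $K$ to obtain a homology $S^1\times S^3$, pass to a prime-power cyclic cover $X'$, which has $b_2(X')=0$ by a theorem of Sumners, so that the two stabilized hypersurfaces cobound $M\subset X'$ with $b_2^+(M)=b_2^-(M)=0$. This $M$ is emphatically not a homology cobordism---it is merely semi-definite in both orientations---so homology-cobordism invariance of $d$ is unavailable. What replaces it is a four-dimensional inequality: the Behrens--Golla bound $c_1^{2}(\mathfrak{s}) + b_2^-(X) \leq 4\underline{d}(Y,\mathfrak{t}) + 2b_1(Y)$ for negative semi-definite fillings, applied to $M$ with both orientations; the $2b_1$ term together with $\underline{d}\bigl(\#_n(S^1\times S^2)\bigr)=-\tfrac{n}{2}$ is exactly what absorbs the $S^1\times S^2$ stabilizations, which is why the \emph{twisted} correction terms, rather than the ordinary $d$-invariant plus homology-cobordism invariance, are required. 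Finally, your appeal to Melvin's principle as the ``conceptual engine'' does no logical work: the fiber is not the output of a cut-and-paste construction to which that principle applies, and the construction it does apply to---the Gluck twist---produces homotopy 4-spheres, which no known invariant distinguishes. The invariance must be, and in the paper is, proved directly through the Seifert-hypersurface stabilization described above.
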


It remains an open question whether Gluck twists can be used to construct exotic 4-manifolds (in particular exotic $S^4$'s), and our theorem shows that one cannot hope to answer it by showing that all 2-knots are 0-concordant to the unknot. 

Every 2-knot is slice \cite{Ke}, but this theorem shows that not every 2-knot is 0-slice, which parallels the situation for knots in $S^3$. In fact, the theory of 0-concordance of 2-knots parallels that of concordance in $S^3$ in a few other ways as well. For example, similar to the fact that a slice knot has vanishing signature, we will show:

\begin{theorem}
If the 2-twist spin of a quasi-alternating knot $K$ is 0-slice, then the signature of $K$ vanishes. 
\end{theorem}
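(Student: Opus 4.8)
The plan is to route the signature of $K$ through the Heegaard--Floer correction terms of the double branched cover $\Sigma_2(K)$ of $S^3$ over $K$, and to feed the $0$-slice hypothesis in through the double branched cover of $S^4$ along the $2$-twist spin $\tau_2 K$. The bridge between signature and Floer homology is the theorem of Manolescu--Owens: for a quasi-alternating knot $K$ the invariant $\delta(K) = 2\,d(\Sigma_2(K),\mathbf{s}_0)$ (twice the correction term of the spin structure $\mathbf{s}_0$) satisfies $\delta(K) = -\sigma(K)/2$. Since the determinant of any knot is odd, $\Sigma_2(K)$ is a $\Z/2$-homology sphere carrying a unique spin structure, so $d(\Sigma_2(K),\mathbf{s}_0)$ is unambiguous and it suffices to prove $d(\Sigma_2(K),\mathbf{s}_0)=0$; then $\delta(K)=0$, and the quasi-alternating hypothesis forces $\sigma(K)=0$.

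Next I would set up the branched cover of the twist spin using Zeeman's fibered description: the complement of $\tau_2 K$ in $S^4$ fibers over $S^1$ with fiber the punctured double branched cover $\Sigma_2(K)\setminus B^3$ and monodromy the covering involution $\tau$. Passing to the connected double cover of the complement doubles the base circle and squares the monodromy, and since $\tau^2=\mathrm{id}$ this double cover is simply $(\Sigma_2(K)\setminus B^3)\times S^1$. Filling in the branch locus then exhibits $\Sigma_2(S^4,\tau_2 K)$ as $(\Sigma_2(K)\setminus B^3)\times S^1$ capped by $S^2\times D^2$; in particular it bounds the $5$-manifold $(\Sigma_2(K)\setminus B^3)\times D^2$, and $\Sigma_2(K)$ itself sits inside $\Sigma_2(S^4,\tau_2 K)$ as a capped-off fiber. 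This makes precise how the three-dimensional invariant $\Sigma_2(K)$ is visible inside the four-dimensional branched cover of $\tau_2 K$.

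Now I would use $0$-sliceness. Given a $0$-concordance $C$ from $\tau_2 K$ to the unknot $U$, take double branched covers along $C\subset S^4\times I$ and cap the $\Sigma_2(S^4,U)=S^4$ end with a $5$-ball; the result is a compact $5$-manifold $\mathcal{B}$ with $\partial\mathcal{B}=\Sigma_2(S^4,\tau_2 K)$. The defining feature of a $0$-concordance --- every regular level set is a union of spheres, whose branched covers are copies of $S^4$ --- is exactly what should force $\mathcal{B}$ to be a \emph{homology} $5$-ball, in analogy with the fact that a ribbon disk in $D^4$ yields a rational homology $4$-ball bounding $\Sigma_2(k)$. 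Since $\mathcal{B}$ is a homology ball the embedded fiber $\Sigma_2(K)\subset\partial\mathcal{B}$ is nullhomologous, so it bounds a compact four-manifold $W\subset\mathcal{B}$; the crux is to arrange that $W$ is a rational (hence, as $\det K$ is odd, a $\Z/2$-) homology ball. Granting such a $W$, the spin structure $\mathbf{s}_0$ extends over it, so $d(\Sigma_2(K),\mathbf{s}_0)=0$, and the chain of implications above closes.

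The main obstacle is precisely this extraction step: producing an \emph{embedded} four-manifold $W$ bounding $\Sigma_2(K)$ with vanishing rational homology. Ordinary sliceness of $\tau_2 K$ already yields a three-ball filling and hence a homology $5$-ball, but only delivers a four-chain bounding $\Sigma_2(K)$, not an embedded homology ball; killing the extra $b_2$ of a generic filling is where the full strength of the $0$-concordance (the control over handles coming from the sphere level sets) must enter, presumably through the general branched-cover machinery developed for Theorem~\ref{mainthm}. I expect the remaining pieces --- the Manolescu--Owens formula, the fibered identification of $\Sigma_2(S^4,\tau_2 K)$, and the extension of the spin structure over a $\Z/2$-homology ball --- to be routine once this homological control is in hand.
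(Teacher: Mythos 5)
Your frame is right at both ends---reducing to $d(\Sigma_2(K),\mathfrak{s}_0)=0$ for the double branched cover $\Sigma_2(K)$ of $S^3$ over $K$ via Manolescu--Owens/Lisca--Owens \cite{MO,LO}, and using Zeeman's fibration to tie $\Sigma_2(K)$ to the $2$-twist spin---but the middle of your argument has a genuine gap, which you yourself flag: the extraction of an \emph{embedded} rational homology $4$-ball $W\subset\mathcal{B}$ with $\partial W=\Sigma_2(K)$. Nullhomology of the capped-off fiber inside a homology $5$-ball only yields some embedded filling, with no control whatsoever on its $b_1$ or $b_2$, and the vanishing theorem for correction terms requires exactly that control; nothing in the $5$-dimensional picture supplies it, and deferring it to ``the machinery developed for Theorem~\ref{mainthm}'' does not help, because the paper's machinery for Theorem~\ref{mainthm} is not branched-cover machinery at all. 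A subsidiary claim is also off: the regular level sets of a $0$-concordance are unions of $2$-spheres, but those spheres may be \emph{knotted}, so their double branched covers are $\Z/2$-homology $4$-spheres rather than copies of $S^4$ (a homology-ball conclusion for $\mathcal{B}$ might still be salvageable by transfer arguments, but not for the reason you give).

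The paper closes this gap by a route that avoids branched covers entirely and stays with Seifert hypersurfaces. The punctured $\Sigma_2(K)$ is a Seifert hypersurface for the $2$-twist spin (Proposition \ref{zeeman}), the punctured $S^3$ is one for the unknot, and Theorem \ref{0conctheorem} then gives $d(\Sigma_2(K),\mathfrak{s}_0)=d(S^3)=0$ directly. The $0$-concordance hypothesis does its work inside the proof of that theorem: by \cite{N} a $0$-concordance factors into two ribbon concordances, so a single $2$-knot has both $\Sigma_2(K)^{\circ}\#\,n(S^1\times S^2)$ and $(S^3)^{\circ}\#\,m(S^1\times S^2)$ as Seifert hypersurfaces (Corollary \ref{seif0}); surgery on that knot produces a homology $S^1\times S^3$ in which the two hypersurfaces are homologous; by Sumners \cite{Sum} a prime-power cyclic cover of it has $b_2=0$, so there the two hypersurfaces cobound a $4$-manifold $M$ with $b_2^{\pm}(M)=0$; and the Behrens--Golla inequality of Proposition \ref{dprops} applied to this semi-definite cobordism, with both orientations, pins down the equality of $\underline{d}$-invariants, the $S^1\times S^2$ summands dropping out since $4\underline{d}+2b_1=0$ for them. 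The idea you are missing is that one never needs a rational homology ball \emph{filling}: a cobordism between the stabilized Seifert hypersurfaces, negative semi-definite in both orientations and living in a cyclic cover of the surgered $S^4$, suffices once one uses the twisted correction terms of \cite{BG}, which absorb the $S^1\times S^2$ stabilizations.
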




How can 0-concordance classes be distinguished? Quandle cohomology has been useful for studying ribbon concordance of 2-knots (see. eg. \cite{CSS}), however, as we will see, what works to distinguish ribbon concordance does not necessarily apply to 0-concordance. In this paper we'll show that the Rochlin invariant of a knot (defined in e.g. \cite{R}) can distinguish 16 different 0-concordance classes (Section 3), and Heegaard-Floer correction terms (specifically the twisted $d$-invariants defined by Behrens-Golla in \cite{BG}) can be used to distinguish infinitely many (Sections 4 and 5). 







\section{Basics about 2-knots and 0-concordance.}

There are two families of 2-knots that are relatively easy to describe: ribbon knots, and twist spun knots. Our study of 2-knots will be based on invariants derived from Seifert hypersurfaces of 2-knots (i.e. 3-manifolds in $S^4$ that have the knot as their boundary), and for both twist spun knots and ribbon knots it is easy to describe natural Seifert hypersurfaces. We will describe these hypersurfaces in this section as well as explain how the Seifert hypersurfaces of 2-knots which are 0-concordant are related.

\subsection{Spun knots}

For a knot $K= S^1 \subset S^3$, one can define the spun 2-knot $S_0(K) \subset S^4$ as indicated in Figure \ref{twist}: The knot $K$ gives rise to an arc $\kappa$ in the 3-dimensional upper half plane $R^3_+$, and thinking of $R^3_+$ embedded in $R^4$, we can spin $R^3_+$ around a central axis sweeping out all of $R^4$, and the arc sweeps out the spun knot. Compactifying $R^4$ then gives us a knot in $S^4$. The \emph{n-twist spun knot}, here denoted $S_n(K)$,  is defined similarly except we rotate $\kappa$ in three-space $n$-times as we sweep it out through $R^4$. Precise coordinate definitions can be found in Zeeman, \cite{Z}, where this construction originally appeared. Note that although Zeeman carefully defines which direction the ``spinning'' should be done in, in an important sense he does not distinguish between the $n$-twist spun knot, and the $(-n)$-twist spun knot. Although at face value $S_n(K)$ and $S_{-n}(K)$ are constructed by spinning is in opposite directions, there is an automorphism of of $S^4$ taking one of these knots to the other. On the other hand, this automorphism does not preserve an orientation on the knot.\footnote{The invariants we look at in this paper can be used to show that often these knots differ as oriented knots. See Remark \ref{sym}.} 
\begin{figure}
\labellist
\small\hair 2pt
\pinlabel {$\times n$} at 42 92
\endlabellist	
\includegraphics{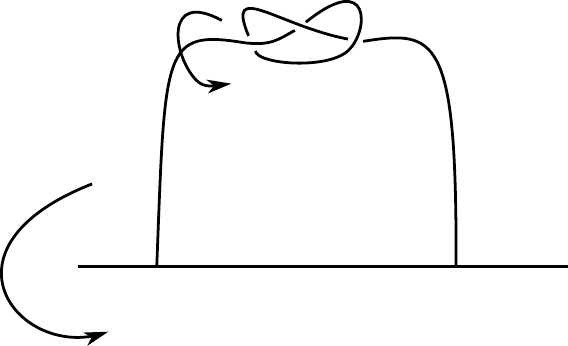} \label{twist}
	\caption{The n-twist spin of a knot.}
	
	\end{figure}

For the purposes of this paper, the most important result about twist spun knots is the following from Zeeman:

\begin{prop}[\cite{Z}]\label{zeeman}
For $n\geq 2$, the complement of a neighborhood of an $n$-twist spun knot fibers over $S^1$, where the fiber is the punctured n-fold branched cover of $S^3$ over the knot $K$, and the monodromy is given by the branching action.\footnote{The 0-twist spun knots, i.e. spun knots, are all ribbon, and have their Seifert hypersurfaces described in the next section. The 1-twist spin of any knot is always unknotted.} 
\end{prop}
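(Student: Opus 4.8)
The plan is to exhibit the complement $W = S^4 \setminus \nu(S_n(K))$ directly as a mapping torus, reading off both the fiber and the monodromy from the spinning coordinates. First I would fix the decomposition of the ambient sphere adapted to the construction: writing $S^4 = \partial(B^3 \times D^2) = (S^2 \times D^2) \cup_{S^2 \times S^1} (B^3 \times S^1)$, where the factor $B^3 \times S^1$ is the ``spun region'' (each slice $B^3 \times \{\theta\}$ being the half-space ball carrying the arc at spin angle $\theta$) and $S^2 \times D^2$ is a neighborhood of the spin axis. Writing $(S^3,K) = (B^3,\kappa)\cup(D^3,\alpha)$ as the knotted ball-arc pair glued to a trivial one, and letting $E = B^3 \setminus \nu(\kappa)$ be the ball-arc complement, the $n$-twist spun knot sits as the twisted family $\{(\rho_\theta \kappa,\theta)\}$ in $B^3 \times S^1$ (here $\rho_\theta$ realizes the $n$-fold twist, with $\rho_{2\pi}$ isotopic to the identity) capped by the two trivial disks $\{p_0,p_1\}\times D^2$ in $S^2\times D^2$, where $p_0,p_1$ are the endpoints of $\kappa$ on the axis.

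The candidate fibration is \emph{not} the spin projection (which is trivial, since $\rho_{2\pi}$ is isotopic to the identity) but the meridian map $f\colon W \to S^1$ generating $H^1(W;\mathbb{Z}) = \mathbb{Z}$; I would take $f$ to be the meridional-angle coordinate near $\partial\nu(S_n(K)) = S^2\times S^1$ and extend it inward. The heart of the argument is then the computation of the fiber over the spun region. On $W \cap (B^3\times S^1)$, which is essentially the bundle $E \times S^1$ over the spin circle, the point is that the $n$-fold twisting forces the spin circle to link $S_n(K)$ exactly $n$ times, so that $f_*$ sends the spin class to $n$ times the meridian class. A short homological computation then shows $f^{-1}(\mathrm{pt}) \cap (B^3\times S^1)$ is the $n$-fold cyclic cover $\widetilde{E}_n$ of $E$ dual to the meridian of $\kappa$, rather than $E$ itself. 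I would next cap off using $S^2 \times D^2$: the two disks are branch disks, filling $\widetilde{E}_n$ in along the branch circle $\tilde\kappa$ and restoring the trivial-arc ball, so that $f^{-1}(\mathrm{pt})$ completes to $\widehat{E}_n = \Sigma_n(K)\setminus \mathrm{int}(B^3)$, the punctured $n$-fold cyclic branched cover, whose boundary sphere matches $\partial W = S^2 \times S^1$. Finally I would identify the monodromy as the branching action: advancing the meridional angle by $2\pi$ shifts one sheet in the $n$-fold cover, which is precisely the order-$n$ deck transformation $\tau$ of $\Sigma_n(K)\to S^3$, and $\tau$ restricts to a rotation isotopic to the identity on the boundary $S^2$, consistent with the product boundary.

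I expect the main obstacle to be the local analysis along the spin axis, i.e. making the capping-off step rigorous: showing that $f$ remains a genuine submersion as it is extended across $S^2\times D^2$ where the knot meets the axis, and verifying that the $n$-fold twisting unwinds there into the standard local model of an $n$-fold branched covering, so that the linking number is exactly $n$ and the fiber caps off to the branched cover with monodromy the deck transformation. This twisting-to-branching dictionary near the two axis points is the subtle step; once it is in place, the hypothesis $n \geq 2$ enters only to guarantee that $\Sigma_n(K)$, and hence the knot, is genuinely nontrivial (for $n=1$ one has $\Sigma_1(K) = S^3$, fiber $D^3$, and $W = D^3\times S^1$, the unknot complement, as in the footnote). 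Assembling the two pieces then presents $W$ as the mapping torus of $\tau$ acting on $\Sigma_n(K)\setminus\mathrm{int}(B^3)$, which is the assertion.
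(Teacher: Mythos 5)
The paper itself gives no proof of this proposition---it is quoted directly from Zeeman's paper [Z]---so the only available comparison is with the original argument, and your outline is a correct reconstruction of it in smooth rather than PL language. The mechanism you describe is exactly Zeeman's: decompose $S^4$ into the spun region $B^3\times S^1$ and the axis neighborhood $S^2\times D^2$; observe that after untwisting the complement piece to $E\times S^1$, the product circle links $S_n(K)$ exactly $n$ times, so the meridian map has nonzero derivative in the spin direction (this is precisely where $n\neq 0$ is used) and is therefore a submersion, hence a fibration with fiber the $n$-fold cyclic cover of the ball--arc complement; then cap off across the axis, where the twisting gives the standard local model of an $n$-fold branched covering, so the fiber completes to the punctured branched cover with monodromy the deck transformation.
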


This gives us a method for finding Seifert hypersurfaces: The fiber of a twist spun knot is a natural Seifert hypersurfaces for the knot.

\subsection{Ribbon knots and ribbon concordance}
Ribbon 2-knots are described as follows: begin with a collection of $n$ unknotted $S^2$'s in $S^4$, and add $n-1$ tubes connecting them in such a way as to get a connected surface. We say that there is a \emph{ribbon concordance} from $K_1$ to $K_2$ if we can add unknotted $S^2$'s to $K_1$ followed by a series of tubes to arrive at $K_2$. Alternatively, we say they are ribbon concordant if there is a concordance where the critical level sets all have index 0 or 1. We showed in \cite{N} that a 0-concordance is really just the composition of two ribbon concordances. Specifically, if $K_1$ is 0-concordant to $K_2$, then there is a third knot that is ribbon concordant to both.

For our purposes, the most important property of ribbon 2-knots is the fact that a ribbon 2-knot has $(\sharp_{(n-1)} S^1\times S^2)^{\circ}$ as a Seifert hypersurface. This generalizes as follows.


\begin{prop}
If 2-knot $K_1$ is ribbon concordant to $K_2$, and $M^\circ$ is a Seifert hypersurface for $K_1$, then $M^{\circ} \# n(S^1\times S^2)$ is a Seifert hypersurface of $K_2$ for some $n\geq 0$. 
\end{prop}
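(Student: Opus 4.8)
The plan is to build the desired Seifert hypersurface for $K_2$ directly out of $M^\circ$ by following the ribbon concordance upward from $K_1$. After rearranging critical points by index, I may assume every birth (index $0$) occurs below every band (index $1$), so that $K_2$ is obtained from $K_1$ by first introducing $k$ unknotted, unlinked $2$-spheres $U_1,\dots,U_k$ and then attaching $k$ tubes whose cores are disjoint embedded arcs $\gamma_1,\dots,\gamma_k$ joining the components $K_1,U_1,\dots,U_k$ into a single sphere (these must form a spanning tree, since the result has genus $0$, by an Euler characteristic count). Because the $U_i$ are unknotted and unlinked I can choose disjoint balls $B_i\cong D^3$ with $\partial B_i=U_i$, placed disjointly from $M^\circ$. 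Thickening each core $\gamma_j$ to a three-dimensional $1$-handle $H_j=\gamma_j\times D^2$ and attaching these to $M^\circ\sqcup B_1\sqcup\cdots\sqcup B_k$ produces a compact $3$-manifold $N$ whose boundary is exactly the tubed surface $K_2$ (the handle contributes $\gamma_j\times\partial D^2$, which is precisely the surface tube). Abstractly $N\cong M^\circ\,\natural\,B_1\,\natural\cdots\natural\,B_k\cong M^\circ$, since boundary-summing with a ball changes nothing.

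The point is that $N$ is only immersed in $S^4$. The interiors of the arcs $\gamma_j$ are disjoint from the surface, but they may pierce the interiors of $M^\circ$ and of the balls $B_i$ transversally in finitely many points; after shrinking the handles (the $\gamma_j$ being disjoint, compact arcs) these are the only failures of embeddedness. I would remove them one at a time. Near such a transverse intersection of a handle core with one of the $3$-dimensional sheets, the two sheets cross along a $2$-disk, and the claim is that there is a local, embedded modification supported in a ball about the intersection which reconnects the sheets, fixes $\partial N=K_2$, and has the net effect of forming a connected sum with a single copy of $S^1\times S^2$. Performing this at each of the $n$ intersection points yields an embedded $3$-manifold $N^\circ\cong M^\circ\#\,n(S^1\times S^2)$ with $\partial N^\circ=K_2$, which is the required Seifert hypersurface.

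The main thing to justify is this last local resolution: that a transverse double-locus of two oriented $3$-manifolds in an oriented $4$-manifold can be removed by an embedded surgery at the cost of one $S^1\times S^2$ summand, leaving the boundary untouched. This is the exact analogue, one dimension up, of resolving the double arc of two transversally intersecting surfaces in a $3$-manifold, which reconnects them and raises the genus; and it is the same mechanism that forces the Seifert hypersurface of a ribbon $2$-knot assembled from $n$ spheres and $n-1$ essential bands to be $(\#_{n-1}(S^1\times S^2))^\circ$ rather than the ball $D^3$ one would obtain if no band pierced a sphere, which is the consistency check for the count. I expect pinning down the topological type of this resolution in a standard local model to be the only real work; the reduction to normal form and the handle attachments are routine, and since the statement only asserts existence for \emph{some} $n\ge 0$, no precise evaluation of the number of piercings is needed.
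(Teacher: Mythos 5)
Your global outline (fill the unknotted spheres with disjoint balls, attach the thickened arcs as $1$-handles, then repair the failures of embeddedness where the arcs pierce the $3$-dimensional sheets) is the same as the paper's, but the step you yourself flag as ``the only real work'' is precisely where the argument fails: there is \emph{no} local, embedded resolution of a \emph{single} transverse piercing. Note first that the double locus $H_j\cap P$ (handle meets sheet) is a $2$-disk $D$ whose boundary circle lies on the tube $\gamma_j\times\partial D^2$, i.e.\ on the surface $K_2$ itself, so the cut-and-paste resolution along a double locus --- which is what works for closed double curves of surfaces in $3$-manifolds, or for double points of surfaces in $4$-manifolds --- cannot be completed near $\partial D$ without moving $K_2$. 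In fact no substitute exists: take a round ball $B$ about the piercing point large enough to contain all of $D$ (a smaller ball is hopeless, since then the prescribed data on $\partial B$ is itself non-embedded), let $A=(\gamma_j\times\partial D^2)\cap B$ be the annulus in which the tube meets $B$, and let $V=B\setminus\nu(A)$. A Mayer--Vietoris computation gives $H_2(V;\mathbb{Z}/2)\cong\mathbb{Z}/2$, generated by the torus (core circle of $A$)$\,\times\,$(meridian of the tube). Any replacement $N'$ supported in $B$, fixing $K_2$ and the data on $\partial B$, would be an embedded $3$-manifold in $V$ with boundary the $2$-sphere $\Sigma_P=P\cap\partial B$ together with the $2$-sphere formed by the two disks $H_j\cap\partial B$ and the inward push-off of $A$. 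The latter sphere bounds the shrunken solid tube, hence is null-homologous in $V$; but $\Sigma_P$ represents the generator of $H_2(V;\mathbb{Z}/2)$, since $P\cap V$ exhibits $\Sigma_P$ as homologous to exactly that generating torus. So the proposed boundary is not even null-homologous in $V$, and no such $N'$ exists. As with double points of surfaces in $4$-manifolds, such intersections can only be removed in \emph{pairs}.

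This pairing is exactly the content of the paper's proof. One first isotopes the arcs $\gamma_i$ so that their intersection points with the sheets pair up positive with negative --- this is possible because the signed count along an arc is \emph{not} an isotopy invariant: intersection points can be created or slid off at the ends of the arcs, where the arcs meet the boundary surface $\partial M'$. Then, for each pair of adjacent opposite-sign piercings, one removes a small ball of sheet around each point and glues in a tube $S^2\times I$ running alongside the arc segment between them, outside the surface tube. Each such cancellation either merges two components of the disconnected sheet (ambient boundary connected sum) or self-tubes a single component, contributing one $S^1\times S^2$ summand; the opposite signs are also what makes the tubed result orientable, a point your proposal never addresses. In particular your accounting of one $S^1\times S^2$ per piercing point is also incorrect --- summands arise once per cancelled pair. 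With the single-point resolution replaced by this pairwise tubing, your construction becomes the paper's proof.
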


The construction of these Seifert hypersurfaces will mirror the construction of Seifert hypersurfaces for ribbon knots (see e.g. \cite{Y} or \cite{R}), so we will only sketch it here.

\begin{proof}
Suppose $K_2$ is constructed as $K_1$ plus a disjoint union of $S^2$'s, plus a series of tubes attached along arcs $\gamma_i$. Notice that $K_1$ and the $S^2$'s bound $M^{\circ}$ plus some $D^3$'s embedded in $S^4$ (see Figure \ref{ribpic}). Call this disconnected manifold $M'$. The $\gamma_i$ intersect $M'$ in isolated points, and by isotoping these arcs, we can assume that these intersections pair up, positive with negative. By connect summing from a positive intersection to a negative along $\gamma_i$, we can replace $M'$ by $M''$, the disjoint union of $M^{\circ} \#_m S^1\times S^2$ with possibly several copies of $(\#_{m_j}S^1\times S^2)^{\circ}$. Finally, by adding 1-handles along the $\gamma_i$, we boundary connect sum the components of $M''$ together, and the boundary will be $K_2$. This gives the desired Seifert hypersurface.

\end{proof}

\begin{figure}
\labellist
\small\hair 2pt
\pinlabel {$D^3$} at 13 37
\pinlabel {$M^{\circ}$} at 151 40
\pinlabel {$\gamma_i$} at 56 93
\pinlabel {$K_1$} at 147 69
\pinlabel {$K_2$} at 374 71
\endlabellist	
\includegraphics{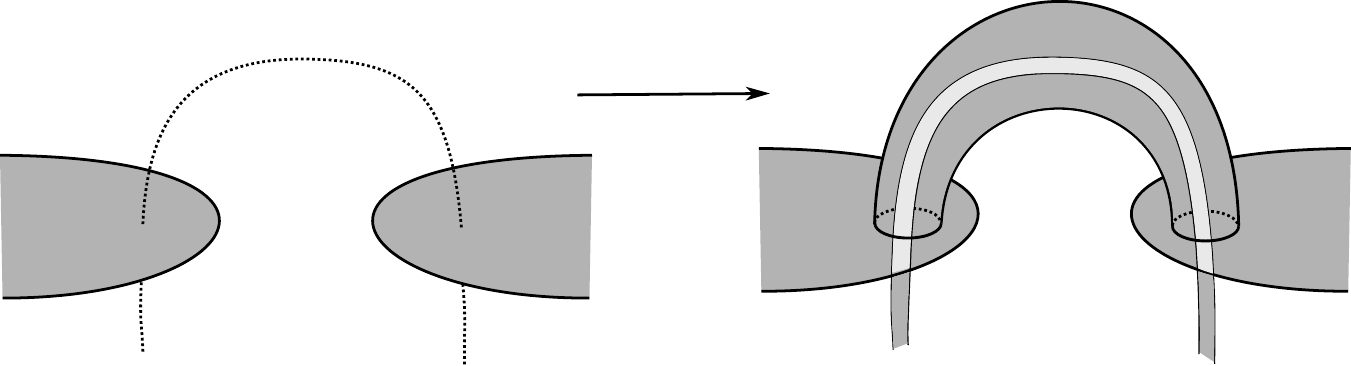}\label{ribpic}
	\caption{Seifert hypersurfaces of ribbon concordances.}
	
	\end{figure}

\begin{cor}\label{seif0}
If 2-knots $K_1$ and $K_2$ are 0-concordant, with Seifert hypersurfaces $M_1^{\circ}$ and $M_2^{\circ}$, then there is a 2-knot $K$ that has both $M_1^{\circ} \# n(S^1\times S^2)$ and $M_2^{\circ} \# m(S^1\times S^2)$ as Seifert hypersurfaces for some $n,m\geq 0$.
\end{cor}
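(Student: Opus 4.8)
The plan is to reduce the corollary to the Proposition just proved, which is a statement about a single ribbon concordance: it transports a Seifert hypersurface from the bottom knot of a ribbon concordance to its top knot, at the cost of connect-summing with copies of $S^1 \times S^2$. Since the corollary instead concerns a 0-concordance, the first task is to break the 0-concordance into ribbon pieces to which the Proposition applies, and then to apply the Proposition to each piece.

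First I would invoke the structural fact recalled just above (from \cite{N}): a 0-concordance from $K_1$ to $K_2$ decomposes as a composition of two ribbon concordances through an intermediate 2-knot $K$. One puts the $3$-dimensional trace of the concordance in Morse position and arranges all index-$0$ and index-$1$ critical points to occur below all index-$2$ and index-$3$ critical points. The sublevel set up to the middle value has $K$ as its top boundary and is built from $K_1$ by index-$0$ and index-$1$ handles, so it is a ribbon concordance from $K_1$ to $K$; dually, reading the top half upside down turns its index-$2$ and index-$3$ critical points into index-$1$ and index-$0$ critical points, exhibiting $K_2$ as ribbon concordant to $K$ as well. Thus both $K_1$ and $K_2$ are ribbon concordant to the single knot $K$.

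Next I would apply the Proposition twice. Applying it to the ribbon concordance from $K_1$ to $K$, with the given Seifert hypersurface $M_1^{\circ}$, produces a Seifert hypersurface of $K$ of the form $M_1^{\circ} \# n(S^1\times S^2)$ for some $n\geq 0$. Applying it to the ribbon concordance from $K_2$ to $K$, with the Seifert hypersurface $M_2^{\circ}$, produces a Seifert hypersurface $M_2^{\circ} \# m(S^1\times S^2)$ of $K$ for some $m\geq 0$. Since both constructions yield Seifert hypersurfaces for the same knot $K$, this $K$ is exactly the 2-knot asserted to exist.

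The step that requires care is keeping track of the direction of each ribbon concordance, since ribbon concordance is not a symmetric relation. The Proposition transports a Seifert hypersurface from the bottom of a ribbon concordance to its top, so for the argument to close up I need both $K_1$ and $K_2$ to sit at the bottom of ribbon concordances whose common top is $K$. This is precisely what the index-ordering above guarantees, as the middle knot $K$ is the target of both halves; the only real subtlety is the upside-down reindexing for the top half, which must be carried out so that its critical points genuinely realize a ribbon concordance from $K_2$ into $K$ rather than the reverse.
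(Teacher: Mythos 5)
Your proposal is correct and takes essentially the same route as the paper: the paper's proof is a one-line citation of the fact from \cite{N} that a 0-concordance decomposes into two ribbon concordances through a middle knot $K$, with the double application of the preceding Proposition left implicit. Your extra attention to direction is well placed (and slightly more careful than the paper's own wording): for the Proposition to apply, both $K_1$ and $K_2$ must be the \emph{sources} of ribbon concordances terminating at the common knot $K$, which is exactly what the standard Morse rearrangement (index 0 and 1 critical points below index 2 and 3) provides.
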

\begin{proof}
This follows from the fact \cite{N} that a 0-concordance from $K_1$ to $K_2$ can be decomposed into a ribbon concordance from $K_1$ to a third knot $K$, followed by a ribbon concordance from $K$ to $K_2$.
\end{proof}


\section{Motivation: Rochlin's Invariant}

In this section we'll describe how Rochlin's invariant can be used to prove a weak version of Theorem \ref{mainthm}. We will begin by reviewing the definition of Rochlin's invariant  for 3-manifolds, and then show how it can be adapted to give invariants of 2-knots. Rochlin's invariant was first applied to 2-knots in the thesis of Ruberman (see \cite{R}), where it is shown to be sensitive to properties like invertibility and amphichirality.

\subsection{Rochlin's invariant for 3-manifolds.}
Rochlin's invariant of a 3-manifold $Y$ with a spin structure $\mathfrak{s}$ is defined as 

\[ \mu(Y,\mathfrak{s}) := \sigma(X) \text{ mod } 16 \]

where $X$ is a spin 4-manifold with a spin structure that restricts to $\mathfrak{s}$ on $Y$, and $\sigma$ represents the signature. See the books of Kirby \cite{K} or Saveliev \cite{Sa} for a thorough description of spin structures and Rochlin's invariant.






\subsection{Rochlin's invarinat of a 2-knot.}

We can now define the Rochlin invariant of an oriented 2-knot to be the Rochlin invariant of any Seifert hypersurface with compatible spin structure. Specifically, given a 2-knot $K$ in $S^4$ with a Seifert hypersurface $Y^{\circ}$ for $K$, define $\mu(K)$ to be $\mu(Y,\mathfrak{s})$, where $\mathfrak{s}$ is induced from the embedding in $S^4$. This definition is essentially that described in \cite{R} and \cite{RS}. 

\begin{prop}\label{roc4}
The definition of $\mu(K)$ does not depend on the choice of Seifert hypersurface.
\end{prop}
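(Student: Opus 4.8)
The plan is to reduce the statement to two independent facts: \textbf{(a)} any two Seifert hypersurfaces of $K$ become diffeomorphic, compatibly with their induced spin structures, after connect-summing each with some number of copies of $S^1 \times S^2$; and \textbf{(b)} the invariant $\mu$ is unchanged by connect sum with $(S^1\times S^2, \mathfrak{s})$. Granting these, if $Y_1^\circ$ and $Y_2^\circ$ are two Seifert hypersurfaces with closed-up manifolds $(Y_1,\mathfrak{s}_1)$ and $(Y_2, \mathfrak{s}_2)$, then (a) gives a spin diffeomorphism $Y_1 \# a(S^1\times S^2) \cong Y_2 \# b(S^1\times S^2)$, and (b) together with additivity of $\mu$ under connected sum yields $\mu(Y_1,\mathfrak{s}_1) = \mu(Y_1 \# a(S^1\times S^2)) = \mu(Y_2 \# b(S^1\times S^2)) = \mu(Y_2, \mathfrak{s}_2)$, which is exactly the claim. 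This mirrors the classical proof that the signature or Arf invariant of a Seifert surface is independent of the surface, and it fits the stabilization language of Corollary \ref{seif0}.

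Fact (b) is the easy step, and I would dispatch it first. Both spin structures on $S^1\times S^2$ bound a spin $4$-manifold of zero signature (for instance $S^1\times B^3$ and $D^2\times S^2$, both of which are spin with vanishing intersection form), so $\mu(S^1\times S^2,\mathfrak{s})=0$. Since a spin filling of a connected sum may be taken to be the boundary connected sum of spin fillings of the summands, signatures add and hence $\sigma \bmod 16$ is additive under $\#$; combined with $\mu(S^1\times S^2,\mathfrak{s})=0$ this gives (b).

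The heart of the matter, and the step I expect to be the main obstacle, is the stabilization statement (a). The key observation is that a Seifert hypersurface is Poincar\'e--Lefschetz dual to a class in $H^1$ of the knot exterior $E = S^4\setminus \operatorname{int} N(K)$, and since $H^1(E;\Z)\cong\Z$ is detected by its value on the meridian of $\partial N(K)\cong S^2\times S^1$, every Seifert hypersurface of $K$ represents the \emph{same} generator. Consequently each $Y_i^\circ$ is isotopic to a regular fiber of a map $f_i\colon E\to S^1$ generating $H^1(E)$, and the two maps $f_1,f_2$ are homotopic. I would choose a homotopy $F\colon E\times I\to S^1$ together with a regular value and then analyze the births and deaths of the resulting $S^1$-valued Morse theory: passing a critical point corresponds to an ambient surgery on the hypersurface along an embedded circle or $2$-sphere, which changes the hypersurface precisely by a connect sum with $S^1\times S^2$ or its inverse. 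Tracking these modifications should show that, after finitely many such stabilizations of each, $Y_1^\circ$ and $Y_2^\circ$ become ambiently isotopic in $S^4$; because all spin structures in sight are induced from the ambient spin structure on $S^4$, an ambient isotopy automatically carries one induced spin structure to the other, giving the spin-compatible diffeomorphism required in (a).

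The technical work, and where I expect the real difficulty, is in making the $S^1$-valued Morse-theoretic cancellation rigorous, in particular controlling the index-$0$/$3$ and index-$1$/$2$ handle pairs so that the net change is genuinely a sequence of tube additions performed ambiently, rather than some more general cobordism. An alternative packaging of the same idea, avoiding the stabilization bookkeeping, is to build directly a spin cobordism $V$ from $Y_1$ to $Y_2$ inside $S^4\times I$ as the preimage $F^{-1}(p)$ above, glue spin fillings to obtain a closed spin $4$-manifold $X_1\cup_{Y_1} V\cup_{Y_2}(-X_2)$, and apply Rochlin's theorem; but this route still requires showing $\sigma(V)\equiv 0 \pmod{16}$, which again reduces to understanding $V$ as a separating fiber in $S^4\times I$, so the essential obstacle is unchanged.
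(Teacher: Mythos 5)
There is a fatal gap: your step \textbf{(a)} is false, not merely technically difficult. Every closed orientable $3$-manifold $Y$ that smoothly embeds in $S^4$ yields a Seifert hypersurface of the \emph{unknot}: puncture $Y$ at a point, and the boundary $2$-sphere of the punctured $Y$ bounds the removed $3$-ball, hence is unknotted. Taking $Y = T^3$ (which embeds in $S^4$ as the boundary of a tubular neighborhood of $T^2 \subset \R^3 \subset \R^4$), the unknot has both $B^3$ and the punctured $T^3$ as Seifert hypersurfaces; but $T^3 \# a(S^1\times S^2)$ is never diffeomorphic to $S^3 \# b(S^1\times S^2)$, since its fundamental group is the free product of $\Z^3$ with a free group, which contains $\Z^2$ and so is not free. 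Thus no amount of stabilization makes two Seifert hypersurfaces of the same knot diffeomorphic, and no Morse-theoretic bookkeeping can establish (a). The precise place your $S^1$-valued Morse theory breaks down is the index-$2$ critical points: these change the level set by surgery along a \emph{circle} in the $3$-manifold, and such a surgery is a (de)stabilization only when the circle is unknotted with the trivial framing; in general it can change the $3$-manifold arbitrarily (every closed orientable $3$-manifold is surgery on a link in $S^3$). Note also that Corollary \ref{seif0} is much weaker than your (a): even for $0$-concordant knots it only produces a third knot admitting both stabilized manifolds as Seifert hypersurfaces of that one knot---it does not make them diffeomorphic. Proposition \ref{roc4} exists precisely because a single knot has many Seifert hypersurfaces that are not even stably diffeomorphic.

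Your ``alternative packaging'' is the right idea in spirit---it is essentially the paper's strategy---but you stop exactly at the missing key point, and in your setting ($V \subset E\times I$) that point cannot be reached: $E\times I$ is a $5$-manifold, and self-intersection numbers of surfaces inside the codimension-one submanifold $V$ are not controlled by $H_2(E\times I)=0$, so there is no visible reason for $\sigma(V)$ to vanish. The paper instead stays in dimension four: surger $K$ to produce a homology $S^1\times S^3$, denoted $X$, in which both capped-off Seifert hypersurfaces $Y_1$ and $Y_2$ represent the generator of $H_3(X)\cong \Z$; pass to a prime-power cyclic cover $X'$ chosen so that $Y_1$ and $Y_2$ have disjoint lifts, where a theorem of Sumners \cite{Sum} guarantees $b_2(X')=0$; and let $M$ be the compact region of $X'$ bounded by $Y_1 \sqcup -Y_2$. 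Since $M$ is a codimension-\emph{zero} submanifold of $X'$, any surface in $M$ with nonzero self-intersection would be a surface in $X'$ with nonzero self-intersection, contradicting $b_2(X')=0$; hence $b_2^{\pm}(M)=0$, so $\sigma(M)=0$ on the nose, and additivity of $\mu$ then gives $\mu(Y_1)=\mu(Y_2)$. This surgery-plus-prime-power-cover step is the essential idea your proposal is missing, and without it (or a substitute for it) the argument does not close.
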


\begin{proof}
Suppose $Y_1$ and $Y_2$ are both (capped off) Seifert hypersurfaces of $K$. Surgery on $K$ in $S^4$ give a homology $S^1\times S^3$, denoted X, where $H_3(X)$ is represented by both $Y_1$ and $Y_2$ embedded in $X$. Let $X'$ denote a cyclic cover of $X$ in which $Y$ and $Y'$ are disjoint. By a theorem of Sumners (\cite[Theorem 3]{Sum}), for a prime power cover, $X'$ will have vanishing second Betti number. Moreover, we have that $Y \sqcup -Y'$ bounds a spin 4-manifold $M \subset X'$.

\begin{claim}
$b_2^+(M) = b_2^-(M) = 0$.
\end{claim}
Hence, assuming the claim is true, the signature of $M$ vanishes. Finally, using the additivity of $\mu$ and the fact that $\mu(Y) = -\mu(-Y) \text{ mod } 16$, we have that $\mu(Y_1) - \mu(Y_2) = \mu(Y_1) + \mu(-Y_2) = \mu(Y_1 \sqcup -Y_2) = \sigma(M) = 0$. 

It only remains to prove the claim:
\begin{proof}[Proof of Claim.] 
Suppose $b_2^+(M) \neq 0$. Then $M$ contains a surface with positive self intersection. But since $M$ is a submanifold of $X'$, this says that $X'$ will also contain a surface with positive self intersection. This contradicts the fact that $b_2(X') = 0$. So $b_2^+(M) = 0$, and the proof for $b_2^-(M)$ is identical.
\end{proof}

\end{proof}

For a slightly different proof, see \cite{R}. Using this invariant, we can prove the following simplified version of Theorem \ref{mainthm}

\begin{theorem}\label{roc0}
If $K_1$ and $K_2$ are 2-knots that are 0-concordant, then $\mu(K_1) = \mu(K_2)$. Moreover, there are at least 16 distinct 0-concordance classes of 2-knots.
\end{theorem}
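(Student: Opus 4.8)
The plan is to prove Theorem~\ref{roc0} in two parts, treating the invariance under 0-concordance first and then deducing the lower bound on the number of classes. For the first part, I would invoke Corollary~\ref{seif0}: if $K_1$ and $K_2$ are 0-concordant, then there is a common knot $K$ possessing both $M_1^\circ \# n(S^1\times S^2)$ and $M_2^\circ \# m(S^1\times S^2)$ as Seifert hypersurfaces. By Proposition~\ref{roc4}, the Rochlin invariant $\mu(K)$ is independent of the chosen Seifert hypersurface, so it can be computed from either of these. The key technical step is then to verify that stabilizing a 3-manifold by connect-summing with copies of $S^1\times S^2$ does not change its Rochlin invariant. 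I would argue this directly from the definition: the spin structure on $S^1\times S^2$ that extends over $S^1 \times D^3$ bounds a spin 4-manifold with vanishing signature (indeed $S^1 \times S^2$ bounds $S^1\times D^3$, and the correct spin structure extends), and by additivity of $\mu$ under disjoint union together with the signature formula for boundary connect sums, we get $\mu(M_i^\circ \# n(S^1\times S^2)) = \mu(M_i^\circ)$. Hence $\mu(K_1) = \mu(K) = \mu(K_2)$.

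For the second part, I must exhibit 16 oriented 2-knots realizing all 16 residues modulo 16. The natural candidates are the $2$-twist spun knots $S_2(K)$, whose Seifert hypersurface, by Proposition~\ref{zeeman}, is the (punctured) double branched cover $\Sigma_2(K)$ of $S^3$ along $K$. For these the Rochlin invariant of the 2-knot equals $\mu(\Sigma_2(K), \mathfrak{s})$ for the induced spin structure, and this is computable in terms of classical knot invariants. I would use the well-known fact that the Rochlin invariant of the double branched cover, with its canonical spin structure, is congruent modulo 16 to the signature of $K$ (more precisely, to $-\sigma(K)$ or a related expression, up to the standard normalization), so that realizing all residues mod $16$ reduces to realizing all the needed signature values. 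Taking connected sums of suitable torus knots or other knots with prescribed signatures (e.g. multiples of a knot with signature $\pm 2$, plus a knot contributing a unit to cover odd residues if necessary) produces the full range of values.

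The main obstacle will be the second part: pinning down the exact relationship between $\mu(K)$ for the 2-knot and a classical invariant of $K$, and ensuring the $16$ chosen knots are genuinely distinguished rather than merely suspected to be. In particular I would need to confirm that the induced spin structure on $\Sigma_2(K)$ coming from its embedding in $S^4$ is the canonical one for which the branched-cover signature formula applies, and that no parity or orientation subtlety collapses the count. The cleanest route is to cite the branched-cover computation of $\mu$ (as in Ruberman's work \cite{R}), verify that the $2$-twist spin realizes it, and then simply list explicit knots $K^{(0)}, \dots, K^{(15)}$ whose invariants exhaust $\Z/16$. Once the realization is in hand, the invariance from the first part immediately forces these $16$ knots into distinct $0$-concordance classes, completing the proof.
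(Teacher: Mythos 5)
Your first part reproduces the paper's argument essentially verbatim: Corollary \ref{seif0} gives a common knot $K$ having both stabilized Seifert hypersurfaces, Proposition \ref{roc4} makes $\mu(K)$ well defined, and additivity of $\mu$ together with $\mu(S^1\times S^2)=0$ (both spin structures on $S^1\times S^2$ bound spin $4$-manifolds of signature zero) yields $\mu(K_1)=\mu(K)=\mu(K_2)$. That half is fine.

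The gap is in the realization of all sixteen values. You propose to get them from $2$-twist spins $S_2(K)$, computing $\mu$ via the congruence $\mu(\Sigma_2(K))\equiv \sigma(K) \pmod{16}$ and then choosing classical knots with prescribed signatures. But the signature of a knot in $S^3$ is always \emph{even}: the symmetrized Seifert form $V+V^T$ has even rank $2g$, and signature and rank always have the same parity. Consequently the congruence you invoke can only ever produce the eight even residues; equivalently, every $2$-twist spun knot has even Rochlin invariant, and no "knot contributing a unit" exists inside this family. This is exactly the parity subtlety you flagged and deferred, and it does collapse your count: your construction proves at most $8$ distinct $0$-concordance classes, not $16$. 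The paper's proof sidesteps this by going outside the double-branched-cover family: it uses a fibered $2$-knot whose fiber is a punctured $L(2,1)$ --- a manifold with first homology of even order, hence not the double branched cover of any knot in $S^3$ --- together with the fact that the relevant spin structure on $L(2,1)$ bounds the Euler number $2$ disk bundle over $S^2$, which is spin with signature $1$. That single example with $\mu=1$, combined with additivity of $\mu$ under connected sum of $2$-knots, realizes all of $\Z/16$. So to complete your argument you must produce one $2$-knot of \emph{odd} Rochlin invariant, and by the parity observation above its Seifert hypersurface cannot be the double branched cover of any knot; some such outside input (as in the paper, or in Ruberman's \cite{R}) is unavoidable.
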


\begin{proof}
By Corolary \ref{seif0}, if $Y_1^{\circ}$ and $Y_2^{\circ}$ are Seifert hypersurfaces for $K_1$ and $K_2$, then $Y_1^{\circ} \# n(S^1\times S^2)$ and $Y_2^{\circ} \# m(S^1\times S^2)$ are both Seifert hypersurfaces for some 2-knot $K$. Therefore (suppressing the spin structures from our notation), $\mu(K_1) = \mu(Y_1) = \mu(Y_1\# nS^1\times S^2) = \mu(Y_2\# nS^1\times S^2) = \mu(Y_2)$. We have used here that $\mu$ is additive under connect sum, and that $\mu(S^1\times S^2) = 0$.

Finally, to demonstrate that all possible values of the Rochlin invariant of a 2-knot are realized, it is enough to find a knot $K$ with $\mu(K)=1$ and use the additivity of $\mu$ under connect sum of knots. For example, the 2-twist spin of the $(2,1)-$torus knot will be a fibered knot with $L(2,1)$ as the fiber (by Proposition \ref{zeeman}), and since the equivariant spin structure on $L(2,1)$ spin bounds a $D^2$ bundle over $S^2$, which has signature 1, the Rochlin invariant of any such knot is 1.

\end{proof}






\begin{remark}
The quandle cohomology invariant (see e.g. \cite{CSS}) can be used to say something about ribbon concordance, but it does not tell us anything about 0-concordance. This is because two different ribbon knots might not be ribbon concordant, and the quandle invariant provides an obstruction. On the other hand, two ribbon knots will always be 0-concordant (by composing ribbon disks).
\end{remark}

\section{Background in Heegaard-Floer homology}

Whereas in the last section we found sixteen distinct 0-concordance classes of 2-knots using Rochlin's invariant, to distinguish more than sixteen 0-concordance classes we will need more refined invariants. These will come from Heegaard-Floer homology. Specifically we will use a variation of the $d$-invariant. The $d$-invariant was first introduced in \cite{OZ} for homology 3-spheres, 3-manifolds with $b_+=1$, and for manifolds with ``standard $HF^{\infty}$.'' These definitions were later extended to ``intermediate'' invariants by Levine and Ruberman in \cite{LR}, to general 3-manifolds (using a slightly different method) in \cite{BG} by Behrens and Golla, and finally placed a much more general context by Levine and Ruberman in \cite{LR2}. Although any of these variations would suffice for our purposes, we will present the version Behrens and Golla which will be the most straightforward for the applications we have in mind.


 What follows are a few of the formal properties of Heegaard-Floer homology used to define these invariants. We denote the field of characteristic 2 by $\mathbb{F}$

For a 3-manifold Y with a $\spinc$ structure $\mathfrak{s}$, the Heegaard-Floer homology is comprised of three abelian groups $HF^+(Y,\mathfrak{s}), HF^-(Y,\mathfrak{s}),$ and $HF^{\infty}(Y,\mathfrak{s})$. These groups have the following additional structure from which we will derive the invariants relevant to this paper. 

Let $\mathcal{T}^{\infty}$, $\mathcal{T}^+$, and $\mathcal{T}^-$ represent the $\mathbb{F}[U]$-modules $\mathbb{F}[U,U^{-1}]$, $\mathbb{F}[U]$, and $\mathbb{F}[U,U^{-1}]/\mathbb{F}[U]$.

\begin{enumerate}
\item When $\mathfrak{s}$ is a torsion $\spinc$ structure, the Heegaard-Floer groups are $\mathbb{Q}$-graded.

\item They have the structure of a $\mathbb{F}[U] \otimes H^1(Y)$ module.

\item A 3-manifold is said to have ``standard $HF^{\infty}$'' if for each torsion $\spinc$ structure $\mathfrak{s}$, $HF^{\infty}(Y, \mathfrak{s}) = \Lambda^*(H_1(Y)/tor)\otimes \mathbb{Z}[U,U^{-1}]$. This is always true if the triple cup-product on $H^1(Y)$ vanishes.

\item If $\mathfrak{s}$ is a torsion $\spinc$ structure on a manifold with standard $HF^{\infty}$, the groups $HF^{\pm}(Y,\mathfrak{s})$ have the structure $\oplus_{2^{b_1(Y)}}\mathcal{T^{\pm}} \oplus \text{f.g. group}$, and $HF^{\infty}(Y,\mathfrak{s})$ has the structure  $\oplus_{2^{b_1(Y)}}\mathcal{T}^{\infty}$.
\item These groups fit into the following long exact triangle:
\[\cdots \rightarrow HF^-(Y,\mathfrak{s}) \rightarrow HF^{\infty}(Y,\mathfrak{s})\rightarrow HF^+(Y,\mathfrak{s})\rightarrow \cdots \]

\end{enumerate}

\subsection{The Behrens-Golla twisted d-Invariants}
The above properties imply that if $Y$ is a homology 3-sphere, then $HF^+(Y) = \mathcal{T}^+ \oplus \{\text{f.g. abelian group}\}$. But they do not tell us anything about the gradings. The original $d$-invariant for a homology 3-sphere was defined to be the smallest grading of any element in the $\mathcal{T}^{+}$ part of $HF^+(Y)$. Alternatively, one can define the $d$-invariant as the smallest grading of any element in the image of $HF^{\infty}(Y)\rightarrow HF^+(Y)$. If, on the other hand, $Y$ is not a homology 3-sphere, one must be more careful, because $HF^{\infty}$ does not necessarily have such a simple form. One way around this, is is to use twisted coefficients to define $\underline{HF}^{\infty}$, $\underline{HF}^{+}$, and $\underline{HF}^{-}$. Then, for a torsion $\spinc$ structure $\mathfrak{s}$ on $Y$, one can show that $\underline{HF}^{\infty}(Y)$ is again standard, and Behrens-Golla define $\underline{d}(Y,\mathfrak{s})$ as the minimal grading in the image of $\underline{HF}^{\infty}(Y,\mathfrak{s})\rightarrow \underline{HF}^+(Y,\mathfrak{s})$.


Those interested in the full details can refer to \cite{BG}. The most important properties of this invariant are summarized in the following statement. 

\begin{prop}\label{dprops}
\begin{enumerate} The invariant $\underline{d}$ satisfies:
\item If $Y$ is a rational homology 3-sphere, then $\underline{d}(Y,\mathfrak{s})$ agrees with the original d-invariant defined by Ozsvath and Szabo, for which $d(Y,\mathfrak{s}) = -d(-Y,\mathfrak{s})$.
\item If $Y = \#_nS^1\times S^2$, then $\underline{d}(Y,\mathfrak{s}_0) = -\frac{n}{2}$ where $\mathfrak{s}_0$ is the trivial $\spinc$ structure.

 
 \item (Additivity) $\underline{d}(Y_1\#Y_2, \mathfrak{s}_1\#\mathfrak{s}_2) = \underline{d}(Y_1, \mathfrak{s}_1) +  \underline{d}(Y_2, \mathfrak{s}_2) $ 
 
\item The inequality \[ c_1^{2}(\mathfrak{s}) + b_2^-(X) \leq 4\underline{d}(Y, \mathfrak{t}) + 2b_1(Y)\] holds, where $X$ is a negative semi-definite 4-manifold bounded by a connected $Y$, and $\mathfrak{s}$ is a $\spinc$ structure on $X$ that restricts to $\mathfrak{t}$ on $Y$. 
 

\end{enumerate}
\end{prop}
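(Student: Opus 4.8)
The plan is to treat the four items separately, drawing on the formal structure of (twisted) Heegaard--Floer homology recalled above and on the foundational work of Behrens and Golla \cite{BG}; the first three are essentially bookkeeping with gradings and with the image of $\underline{HF}^\infty$ in $\underline{HF}^+$, while the fourth carries the real content.

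For item (1), the key observation is that when $Y$ is a rational homology sphere we have $b_1(Y)=0$, so $H^1(Y)$ is pure torsion and the twisting module degenerates: the twisted and untwisted theories coincide, $\underline{HF}^\bullet(Y,\mathfrak{s})=HF^\bullet(Y,\mathfrak{s})$, and $\underline{HF}^\infty(Y,\mathfrak{s})=\mathcal{T}^\infty$ is automatically standard. Thus the minimal grading in the image of $\underline{HF}^\infty\to\underline{HF}^+$ equals that in the image of $HF^\infty\to HF^+$, which is the original Ozsv\'ath--Szab\'o $d$-invariant. The symmetry $d(Y,\mathfrak{s})=-d(-Y,\mathfrak{s})$ I would simply quote from \cite{OZ}, where it comes from the duality pairing between $HF^+(Y)$ and $HF^-(-Y)$ together with the grading reversal under orientation change.

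For items (2) and (3) I would prove additivity (3) first and then read off (2). The engine for (3) is the connected-sum (K\"unneth) principle: over $\mathbb{F}[U]$ the twisted chain complexes satisfy $\underline{CF}^-(Y_1\#Y_2)\simeq\underline{CF}^-(Y_1)\otimes\underline{CF}^-(Y_2)$, compatibly with the maps to the $\infty$-flavor, and in a tensor product the bottommost element in the image of $\underline{HF}^\infty$ sits in the grading that is the sum of the two bottommost gradings. Hence the minimal gradings add, giving $\underline{d}(Y_1\#Y_2)=\underline{d}(Y_1)+\underline{d}(Y_2)$; the only subtlety is to pin down the grading-shift convention so that the degree-zero generator of $S^3$ acts as a unit. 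Granting (3), item (2) reduces to the single computation $\underline{d}(S^1\times S^2,\mathfrak{s}_0)=-\tfrac12$: here $\underline{HF}^\infty$ is standard, equal to $\Lambda^*(H_1/\mathrm{tor})\otimes\mathbb{Z}[U,U^{-1}]$, and inspecting the image in $\underline{HF}^+$ places its bottom in grading $-\tfrac12$; summing $n$ copies via (3) then yields $-\tfrac{n}{2}$.

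Item (4) is where I expect the main obstacle, and the strategy is to run the twisted-coefficient analogue of Ozsv\'ath--Szab\'o's negative-semidefinite inequality. I would delete a ball from $X$ to view it as a cobordism $W$ from $S^3$ to $Y$, and study the induced map on the $\infty$-flavor, $F^\infty_{W,\mathfrak{s}}\colon HF^\infty(S^3)\to\underline{HF}^\infty(Y,\mathfrak{t})$. Two facts drive the estimate: because $X$ is negative semi-definite one has $b_2^+(X)=0$ and $\sigma(X)=-b_2^-(X)$, which forces this map to be nonzero onto the standard tower; and the map shifts absolute gradings by $\tfrac14\bigl(c_1^2(\mathfrak{s})-2\chi(W)-3\sigma(W)\bigr)$. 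Chasing a generator through $W$ into the image of $\underline{HF}^\infty(Y,\mathfrak{t})\to\underline{HF}^+(Y,\mathfrak{t})$, whose minimal grading is by definition $\underline{d}(Y,\mathfrak{t})$, and then feeding in the topological identities for $W$ together with the rank-$2^{b_1(Y)}$ spread of the standard $\underline{HF}^\infty$ --- which is exactly what produces the correction term $2b_1(Y)$ --- rearranges into $c_1^2(\mathfrak{s})+b_2^-(X)\le 4\underline{d}(Y,\mathfrak{t})+2b_1(Y)$. The delicate point, and the reason twisted coefficients are indispensable once $b_1(Y)>0$, is establishing that the cobordism map behaves as claimed on the standard $\underline{HF}^\infty$; this is precisely the technical heart supplied in \cite{BG}, which I would invoke rather than reprove.
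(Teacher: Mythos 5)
The paper itself offers no proof of this proposition: all four items are quoted as a summary of results from Behrens--Golla \cite{BG} (with item (1) resting on \cite{OZ}), and the reader is explicitly referred there for details. So there is no in-paper argument to compare against, and your decision to invoke \cite{BG} for the technical heart of item (4) is in the same spirit as what the author does. Your overall architecture --- twisted equals untwisted for rational homology spheres, a K\"unneth argument for additivity, a single model computation for $S^1\times S^2$, and a cobordism-map/grading-shift argument for the negative semi-definite inequality --- is indeed how these facts are established in the literature.

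That said, one point in your sketch misrepresents the objects involved, and it happens to be the conceptual crux of the whole construction. You describe $\underline{HF}^\infty(S^1\times S^2,\mathfrak{s}_0)$ as ``standard, equal to $\Lambda^*(H_1/\mathrm{tor})\otimes\mathbb{Z}[U,U^{-1}]$,'' and later attribute the $+2b_1(Y)$ term in item (4) to ``the rank-$2^{b_1(Y)}$ spread of the standard $\underline{HF}^\infty$.'' That is the structure of the \emph{untwisted} theory. The entire point of the twisted construction is that, with universal twisted coefficients, $\underline{HF}^\infty(Y,\mathfrak{t})$ collapses to a \emph{single} tower (a copy of $\Lambda[U,U^{-1}]$ over the group ring $\Lambda=\mathbb{F}[H^1(Y;\mathbb{Z})]$) for every torsion $\spinc$ structure, with no hypothesis on cup products; this is precisely what makes $\underline{d}$ well defined as the minimal grading in the image of a unique tower. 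Consequently the $2b_1(Y)$ in the inequality does not come from a spread of towers; it emerges from the Euler-characteristic and grading-shift bookkeeping for the cobordism $W=X\setminus B^4$ together with the grading conventions for the twisted tower. Two smaller slips: in item (1), for a rational homology sphere $H^1(Y;\mathbb{Z})$ is actually zero (not ``pure torsion'') --- the torsion lives in $H^2$ --- though your conclusion that the twisting is trivial is correct; and in item (3), the K\"unneth argument alone yields only one of the two inequalities (in the untwisted rational-homology-sphere case the reverse direction uses the duality $d(-Y,\mathfrak{s})=-d(Y,\mathfrak{s})$), so additivity in the twisted setting is not purely a grading-convention matter and is part of what \cite{BG} must supply.
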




\section{Proofs of Theorems 1 and 2}

To prove Theorem 1, the role of Rochlin's invariant in Theorem \ref{roc0} will be replaced by the $\underline{d}$ invariant, and the fact that Rochlin's invariant is invariant under cobordisms with vanishing signature will be replaced with the aforementioned inequality.

In the remainder of this paper, we will assume that all 2-knots are oriented, and that all Seifert hypersurfaces are oriented in such a way to be compatible with their corresponding 2-knots. Furthermore, we will require that all 0-concordances are oriented consistently with the 2-knot orientations. 

\begin{theorem} \label{0conctheorem}
Let $K_1$ and $K_2$ be oriented 0-concordant 2-knots in $S^4$. If they have Seifert manifolds $Y_1^{\circ}$ and $Y_2^{\circ}$, both of which are punctured rational homology 3-spheres, then $d(Y_1,\mathfrak{s}_1) = d(Y_2,\mathfrak{s}_2)$, where $\mathfrak{s}_i$ represents the $\spinc$ structure on $Y_i$ induced from the $\spinc$ structure on $S^4$.
\end{theorem}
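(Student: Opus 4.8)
The plan is to mirror the proof of Theorem \ref{roc0}, replacing Rochlin's invariant with the twisted $d$-invariant and replacing the vanishing-signature input with the inequality of Proposition \ref{dprops}(4). First I would invoke Corollary \ref{seif0} to produce a single 2-knot $K$ admitting both $Z_1 := Y_1 \# n(S^1\times S^2)$ and $Z_2 := Y_2 \# m(S^1\times S^2)$ as (capped-off) Seifert hypersurfaces, for some $n,m\geq 0$. Since $Y_1,Y_2$ are rational homology spheres, additivity (Proposition \ref{dprops}(3)) together with the computation $\underline{d}(\#_k S^1\times S^2,\mathfrak{s}_0) = -k/2$ (Proposition \ref{dprops}(2)) gives $\underline{d}(Z_1) = d(Y_1) - n/2$ and $\underline{d}(Z_2) = d(Y_2) - m/2$; here I use that the induced $\spinc$ structure restricts to the trivial structure on each $S^1\times S^2$ factor. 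It therefore suffices to compare $\underline{d}(Z_1)$ and $\underline{d}(Z_2)$.

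Next I would construct a cobordism between $Z_1$ and $Z_2$ exactly as in the proof of Proposition \ref{roc4}. Surgery on $K$ in $S^4$ yields a homology $S^1\times S^3$, call it $W$, in which $Z_1$ and $Z_2$ both represent the generator of $H_3(W)$. Passing to a prime-power cyclic cover $W'$, so that $b_2(W') = 0$ by Sumners' theorem, I can arrange lifts $\tilde Z_1 \cong Z_1$ and $\tilde Z_2 \cong Z_2$ to be disjoint, cobounding a compact spin $4$-manifold $M \subset W'$ with $\partial M = \tilde Z_1 \sqcup -\tilde Z_2$. The Claim in the proof of Proposition \ref{roc4} applies verbatim to show $b_2^+(M) = b_2^-(M) = 0$; in particular $M$ is negative semi-definite and, being spin, carries a $\spinc$ structure with $c_1^2 = 0$ restricting to the induced structures on the ends. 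Finally I would attach a $1$-handle joining the two boundary components to obtain $M^+$ with connected boundary $\tilde Z_1 \# (-\tilde Z_2)$, still with $b_2^{\pm}(M^+) = 0$.

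Now I would apply the inequality of Proposition \ref{dprops}(4) to $M^+$ and to its orientation reversal $-M^+$ (both negative semi-definite since $b_2 = 0$). Using $c_1^2 = 0$, $b_2^-(M^{\pm}) = 0$, $b_1(\tilde Z_1 \# -\tilde Z_2) = n+m$, and additivity of $\underline{d}$ together with $\underline{d}(-Y) = -d(Y)$ (Proposition \ref{dprops}(1)), the two applications read
\[ 0 \leq 4\big(d(Y_1) - d(Y_2) - \tfrac{n+m}{2}\big) + 2(n+m), \qquad 0 \leq 4\big(d(Y_2) - d(Y_1) - \tfrac{n+m}{2}\big) + 2(n+m). \]
The stabilization terms $-(n+m)/2$ are exactly cancelled by the $2b_1$ correction, leaving $d(Y_1) \geq d(Y_2)$ and $d(Y_2) \geq d(Y_1)$, hence $d(Y_1) = d(Y_2)$.

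I expect the main obstacle to be the cobordism construction of the middle paragraph: verifying that the disjoint lifts genuinely cobound a $b_2 = 0$ cobordism carrying a $\spinc$ structure with the correct restrictions, which is where the geometry of $0$-concordance (via Corollary \ref{seif0}) and the Sumners covering argument do the real work. The bookkeeping in the last paragraph is routine once one checks the crucial point that the $+2b_1(Y)$ term in Proposition \ref{dprops}(4) precisely absorbs the $-k/2$ contributions of the $S^1\times S^2$ summands, which is exactly what makes the two inequalities collapse to an equality.
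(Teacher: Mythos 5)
Your proposal is correct and follows essentially the same route as the paper's proof: Corollary \ref{seif0} to get a common knot with stabilized Seifert hypersurfaces, surgery to a homology $S^1\times S^3$, Sumners' prime-power cover to obtain a $b_2=0$ cobordism, boundary connected sum along an arc (your 1-handle attachment is the same operation as the paper's removal of an arc neighborhood), and then the Proposition \ref{dprops}(4) inequality applied in both orientations, with the $2b_1$ term cancelling the $S^1\times S^2$ stabilization contributions exactly as you note.
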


\begin{proof}
The proof follows that of Proposition \ref{roc4}, where we replace Rochlin's invariant with the inequality in Proposition \ref{dprops}. Since $K_1$ and $K_2$ are 0-concordant, there is a 2-knot $K$ that has both $\underline{Y}_1^{\circ}= Y_1^{\circ} \# n(S^1\times S^2)$ and $\underline{Y}_2^{\circ} = Y_2^{\circ} \# m(S^1\times S^2)$ as Seifert hypersurfaces by Corollary \ref{seif0}. Surgery along K then gives a homology $S^1\times S^3$, denoted by X, with $\underline{Y}_1$ and $\underline{Y}_2$ representing the same homology class in $H_3(X)$. 

As in Proposition \ref{roc4}, take a high enough prime power cover of $X'$ of $X$ such that $\underline{Y}_1$ and $\underline{Y}_2$ are disjoint in $X'$. Now $\underline{Y}_1 \sqcup -\underline{Y}_2$ bounds a 4-manifold $M \subset X'$ such that $b_2^-(M) = 0$. Finally, construct a manifold $M'$ from $M$ with $\partial M' = \underline{Y}_1 \# -\underline{Y}_2$ by removing an open neighborhood of an arc connecting $\underline{Y}_1$ to $\underline{Y}_2$ in $M$. Moreover $b_2^-(M')$ is still 0. 



Applying the inequality to $M'$ now gives: 
\begin{align*}
c_1^{2}(\mathfrak{s}) + b_2^-(M') & \leq 4\underline{d}(Y_1 \# -Y_2 \#(m+n)S^1\times S^2) + 2b_1(Y_1 \# -Y_2 \#(m+n)S^1\times S^2) \\
0 & \leq 4d(Y_1) - 4d(Y_2) + (4\underline{d}((m+n)S^1\times S^2) + 2b_1((m+n)S^1\times S^2))) \\
0  & \leq 4d(Y_1) - 4d(Y_2) + 0
\end{align*}
So $d(Y_2) \leq d(Y_1)$

The opposite inequality is proved similarly by reversing the orientation of $M'$.

\end{proof}

Using this theorem, it is relatively simple to exhibit many twist spun knots that all lie in different 0-concordance classes.

\begin{proof}[Proof of Theorem 1]
Using the additivity of $\underline{d}$-invariants under connect sum, we only need to find a 2-knot that has a homology 3-sphere Seifert hypersurface with non-trivial $\underline{d}$-invariant. The 5-twist spin on the trefoil has the Poincar\'e homology sphere as a Seifert hypersurface \cite{Z}, which has $\underline{d}$-invariant equal to 2 (see \cite[Section 8]{OZ}). So by taking connected sums of this 2-knot we get an infinite number of oriented 0-concordance classes. Examples of infinite families of prime knots, none of which are 0-concordant arise by taking the 2-twist spin on the $(2,p)$-torus knots, which have the lens space $L(p,1)$ as a Seifert hypersurface, and $d(L(p,1),\mathfrak{s}_0) = \frac{p-1}{4}$.  
\end{proof}

Many more examples from twist spun knots can be computed using, for example, the $d$-invariant calculations for double branched covers in \cite{MO,LO} and the calculations for higher branched covers in \cite{J}. Additional relevant techniques, computations, and examples can be found in Section 5 of \cite{LR2}.

\begin{proof}[Proof of Theorem 2]
Suppose the 2-twist spin of a quasi-alternating knot $K$ is 0-slice. Then by Zeeman's result (Proposition \ref{zeeman}), the double branched cover of $K$, denoted $Y^{\circ}$, is a Seifert hypersurface of this 2-knot, and by Theorem \ref{0conctheorem}, we have that $d(Y, \mathfrak{s}_0) = 0$. (Here note that $\mathfrak{s}_0$ is the $\spinc$ structure induced by the unique spin structure on $Y$, which in turn is equal to the $\spinc$ structure induced from $S^4$). However, the computations of Lisca-Owens in \cite{LO} (following those of Manolescu-Owens in \cite{MO}), show that if $K$ is quasi-alternating, then $2d(Y, \mathfrak{s}_0)$ equals the signature of $K$. 
\end{proof}

\begin{remark}\label{sym}
The strategy using twisted correction terms above can also be employed to obstruct a 2-knot from being amphichiral or invertible. This has been pursued in \cite{LR2} using using a more sophisticated setup that allows one to consider 2-knots with general Seifert hypersurfaces (not just 2-knots that have Seifert hypersurfaces that are a rational homology sphere connect summed with some $S^1\times S^2$'s, like we consider in this paper). 
\end{remark}

\textbf{Acknowledgements:} The author would like to thank Adam Levine for his help in understanding intermediate correction terms, and Stefan Behrens and Marco Golla, both of whom provided extensive comments on an earlier draft of this paper.

\begin{bibdiv}
\begin{biblist}

\bib{BG}{article}{
   author={Behrens, Stefan},
   author={Golla, Marco},
   title={Heegaard Floer correction terms, with a twist},
   journal={Quantum Topol.},
   volume={9},
   date={2018},
   number={1},
   pages={1--37},
 }


\bib{C}{article}{
   author={Cochran, Tim},
   title={Ribbon knots in $S^{4}$},
   journal={J. London Math. Soc. (2)},
   volume={28},
   date={1983},
   number={3},
   pages={563--576},
}

\bib{CSS}{article}{
   author={Carter, J. Scott},
   author={Saito, Masahico},
   author={Satoh, Shin},
   title={Ribbon concordance of surface-knots via quandle cocycle
   invariants},
   journal={J. Aust. Math. Soc.},
   volume={80},
   date={2006},
   number={1},
   pages={131--147},
  
}

\bib{G}{article}{
   author={Gordon, Cameron McA.},
   title={On the reversibility of twist-spun knots},
   journal={J. Knot Theory Ramifications},
   volume={12},
   date={2003},
   number={7},
   pages={893--897},
   issn={0218-2165},
   review={\MR{2017959 (2004m:57011)}},
   doi={10.1142/S0218216503002822},
}

\bib{H}{book}{
   author={Hillman, J. A.},
   title={Four-manifolds, geometries and knots},
   series={Geometry \& Topology Monographs},
   volume={5},
   publisher={Geometry \& Topology Publications, Coventry},
   date={2002},
   pages={xiv+379},

}
      
\bib{J}{article}{
   author={Jabuka, Stanislav},
   title={Concordance invariants from higher order covers},
   journal={Topology Appl.},
   volume={159},
   date={2012},
   number={10-11},
   pages={2694--2710},

}
      
\bib{K}{book}{
   author={Kirby, Robion C.},
   title={The topology of $4$-manifolds},
   series={Lecture Notes in Mathematics},
   volume={1374},
   publisher={Springer-Verlag, Berlin},
   date={1989},
   pages={vi+108},

}

\bib{Ke}{article}{
   author={Kervaire, Michel A.},
   title={Les n\oe uds de dimensions sup\'erieures},
   language={French},
   journal={Bull. Soc. Math. France},
   volume={93},
   date={1965},
   pages={225--271},
}
      
\bib{L}{article}{
   author={Litherland, R. A.},
   title={Symmetries of twist-spun knots},
   conference={
      title={Knot theory and manifolds},
      address={Vancouver, B.C.},
      date={1983},
   },
   book={
      series={Lecture Notes in Math.},
      volume={1144},
      publisher={Springer, Berlin},
   },
   date={1985},
   pages={97--107},

}

\bib{LO}{article}{
   author={Lisca, Paolo},
   author={Owens, Brendan},
   title={Signatures, Heegaard Floer correction terms and quasi-alternating
   links},
   journal={Proc. Amer. Math. Soc.},
   volume={143},
   date={2015},
   number={2},
   pages={907--914},

}

\bib{LR}{article}{
Author = {Levine, Adam},
author ={Ruberman, Daniel}
Title = {Generalized Heegaard Floer correction terms},
journal={Proceedings of Gökova Geometry-Topology Conference},
Year = {2013},
pages={76--96}
}

\bib{LR2}{article}{
Author = {Levine, Adam},
author ={Ruberman, Daniel}
Title = {Heegaard Floer invariants in codimension one},
intype = {to appear in},
journal={Transactions of the AMS}
}

\bib{M}{book}{
   author={Melvin, Paul Michael},
   title={Blowing up and down in 4-manifolds},
   note={Thesis (Ph.D.)--University of California, Berkeley},
   publisher={ProQuest LLC, Ann Arbor, MI},
   date={1977},
   pages={72},
}

   \bib{MO}{article}{
   author={Manolescu, Ciprian},
   author={Owens, Brendan},
   title={A concordance invariant from the Floer homology of double branched
   covers},
   journal={Int. Math. Res. Not. IMRN},
   date={2007},
   number={20},
   pages={Art. ID rnm077, 21},

}

\bib{N}{article}{
   author={Sunukjian, Nathan},
   title={Surfaces in 4-manifolds: concordance, isotopy, and surgery},
   journal={Int. Math. Res. Not. IMRN},
   date={2014},
}
   
\bib{O}{article}{
   author={Ogasa, Eiji},
   title={Ribbon-moves of 2-links preserve the $\mu$-invariant of 2-links},
   journal={J. Knot Theory Ramifications},
   volume={13},
   date={2004},
   number={5},
   pages={669--687},

}

\bib{OZ}{article}{
   author={Ozsv{\'a}th, Peter},
   author={Szab{\'o}, Zolt{\'a}n},
   title={Absolutely graded Floer homologies and intersection forms for
   four-manifolds with boundary},
   journal={Adv. Math.},
   volume={173},
   date={2003},
   number={2},
   pages={179--261},
}

\bib{R}{article}{
   author={Ruberman, Daniel},
   title={Doubly slice knots and the Casson-Gordon invariants},
   journal={Trans. Amer. Math. Soc.},
   volume={279},
   date={1983},
   number={2},
   pages={569--588},
   issn={0002-9947},
   review={\MR{709569 (85e:57025)}},
   doi={10.2307/1999553},
}
        
\bib{RS}{article}{
   author={Ruberman, Daniel},
   author={Saveliev, Nikolai},
   title={Casson-type invariants in dimension four},
   conference={
      title={Geometry and topology of manifolds},
   },
   book={
      series={Fields Inst. Commun.},
      volume={47},
      publisher={Amer. Math. Soc., Providence, RI},
   },
   date={2005},
   pages={281--306},
}
	
\bib{Sa}{book}{
   author={Saveliev, Nikolai},
   title={Lectures on the topology of 3-manifolds},
   series={de Gruyter Textbook},
   edition={Second revised edition},
   note={An introduction to the Casson invariant},
   publisher={Walter de Gruyter \& Co., Berlin},
   date={2012},
   pages={xii+207},
}

\bib{Sum}{article}{
   author={Sumners, D. W.},
   title={On the homology of finite cyclic coverings of higher-dimensional
   links},
   journal={Proc. Amer. Math. Soc.},
   volume={46},
   date={1974},
   pages={143--149},
}

\bib{Y}{article}{
   author={Yanagawa, Takaaki},
   title={On ribbon $2$-knots. The $3$-manifold bounded by the $2$-knots},
   journal={Osaka J. Math.},
   volume={6},
   date={1969},
   pages={447--464},

}

\bib{Z}{article}{
   author={Zeeman, E. C.},
   title={Twisting spun knots},
   journal={Trans. Amer. Math. Soc.},
   volume={115},
   date={1965},
   pages={471--495},

}

\end{biblist}
\end{bibdiv}

\end{document}